      \theoremstyle{plain}
      \newtheorem{theorem}{Theorem}
      \newtheorem{lemma}[theorem]{Lemma}
      \theoremstyle{definition}
      \theoremstyle{remark}
      \theoremstyle{plain}
      \newtheorem*{theorem*}{Theorem}
      \newtheorem*{lemma*}{Lemma}
      \newtheorem*{corollary*}{Corollary}
      \newtheorem*{proposition*}{Proposition}
      \newtheorem*{conjecture*}{Conjecture}
      \newtheorem*{question*}{Question}
      \newtheorem*{claim*}{Claim}
      \theoremstyle{definition}
      \newtheorem*{definition*}{Definition}
      \newtheorem*{example*}{Example}
      \newtheorem*{game*}{Game}
      \theoremstyle{remark}
      \newtheorem*{remark*}{Remark}
\begin{document}

\title[Idempotent bonding relations are nontrivial iff they satisfy Gamma]{Idempotent bonding relations are nontrivial if and only
if they satisfy condition \(\Gamma\)}



\author{Steven Clontz}
\address{Department of Mathematics and Statistics, University
of North Carolina at Charlotte,
Charlotte, NC 28223}
\email{steven.clontz@gmail.com}
\author{Scott Varagona}
\address{Department of Biology, Chemistry \& Mathematics,
University of Montevallo, Montevallo, AL 35115}
\email{svaragona@montevallo.edu}


\newcommand{\<}{\langle}
\renewcommand{\>}{\rangle}
\newcommand{\rest}{\restriction}

\begin{abstract}
  A relation \(f\subseteq X^2\) satisfies condition \(\Gamma\) if
  there exist distinct \(x,y\in X\) with 
  \(\<x,x\>,\<x,y\>,\<y,y\>\in f\).
  The authors improve a previous result by characterizing
  nontrivial idempotent bonding relations on 
  compact Hausdorff spaces as those satisfying 
  condition \(\Gamma\).
\end{abstract}

\maketitle

\section{Preliminaries}

Assume \(X\) is always a compact Hausdorff space.

Let a relation \(f\subseteq X^2\) be full if
\(\forall x\in X \exists y\in X(\<x,y\>\in f)\).
We define a bonding relation \(f\subseteq X^2\) 
on \(X\) to be a full relation which is a closed subset
of \(X^2\). Such relations 
are often alternately characterized as upper-semicontinuous (u.s.c.)
maps, which are continuous functions from \(X\) to the space 
\(H(X)\) of nonempty closed
subsets of \(X\). As such let \(f(x)=\{y\in X:\<x,y\>\in f\}\),
\(f[A]=\{y\in X:\exists x\in A(\<x,y\>\in f)\}\), and
\(
  f^2
    =
  f\circ f=\{\<x,z\>\in X^2
    :
  \exists y\in X(\<x,y\>,\<y,z\>\in f)\}
\), that is, \(f^2(x)=f[f(x)]\).

A relation is idempotent if \(f=f^2\). 
It is surjective if for each \(y\in X\),
there exists \(x\in X\) where \(\<x,y\>\in f\).
For \(A\subseteq X\), let \(f\rest A=\{\<x,y\>\in f:x\in A\}\)
be the restriction of \(f\) to \(A\). Note that if \(f\) is
idempotent then \(f\rest f(x)\) is surjective (onto \(f(x)\))
for all \(x\in X\).
Let \(\iota=\{\<x,x\>:x\in X\}\) be the identity relation.
We say a bonding relation \(f\) is nontrivial if for some \(x\in X\),
\(f\rest f(x)\not=\iota\rest f(x)\). A single-valued bonding
relation satisfies \(|f(x)|=1\) for all \(x\in X\).

It's important to note that
if \(f\) is an idempotent surjective
single-valued bonding relation,
then \(f=\iota\) and thus is trivial.
Likewise, every trivial idempotent surjection is the
single-valued identity.

However there are trivial idempotent bonding relations besides
the identity: take for instance 
\(t\subseteq\{0,1,2\}^2\) defined by
\(t=\{\<0,0\>,\<1,1\>,\<2,0\>,\<2,1\>\}\). Then
\(t\rest f(2)=t\rest\{0,1\}=\iota\); of
course, \(t\) fails to map to \(2\) and is not
surjective. By connecting the dots the reader may sketch
a version of \(t\) defined for the closed interval 
\([0,2]\subseteq\mathbb R\).

Say that \(f\) satisfies condition \(\Gamma\) if there exist 
distinct \(x,y\in X\) such that \(\<x,x\>,\) 
\(\<x,y\>,\) \(\<y,y\>\in f\).
The authors will show that an idempotent bonding relation
is nontrivial if and only if it satisfies condition \(\Gamma\).
This note answers their question in \cite{CV} by
generalizing their result on interval-valued idempotent relations
defined on the closed interval \([0,1]\subseteq\mathbb R\).

\section{Main Result}



  

\begin{lemma}
  Every nontrivial idempotent bonding relation \(f\) 
  contains two points
  \(\<x,x\>\) and \(\<y,x\>\)
  for distinct \(x,y\in X\).
\end{lemma}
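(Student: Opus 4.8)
The plan is to use nontriviality to locate a point at which \(f\) genuinely branches, and then to find inside its image a fixed point onto which that branching point maps. By the definition of nontriviality there are \(x_0\) and \(p\in f(x_0)\) with \(f(p)\neq\{p\}\); in fact all that the argument needs is the existence of a single \(p\) with \(f(p)\neq\{p\}\), so I would fix such a \(p\).

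The key structural observation is that \(f(p)\) is a nonempty closed set that is \emph{invariant}, in the sense that \(f[f(p)]\subseteq f(p)\): this is immediate from idempotence, since \(f[f(p)]=f^2(p)=f(p)\). More generally, for every \(a\in X\) the slice \(f(a)\) is nonempty (fullness), closed (as \(f\) is u.s.c., equivalently the slice of the closed set \(f\)), and invariant, because \(f[f(a)]=f^2(a)=f(a)\). I would then consider the collection of all nonempty closed invariant subsets of \(f(p)\), ordered by reverse inclusion. Given a chain, its intersection is again nonempty (by compactness and the finite intersection property, since any finite subchain has a least member, which is nonempty), closed, and invariant; so Zorn's lemma yields a minimal nonempty closed invariant set \(A\subseteq f(p)\).

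The crux is to show that \(A\) consists entirely of fixed points. For \(a\in A\), invariance of \(A\) gives \(f(a)\subseteq A\), while \(f(a)\) is itself a nonempty closed invariant set; minimality of \(A\) then forces \(f(a)=A\). In particular \(a\in f(a)\), so every point of \(A\) is a fixed point and \(\langle a,a\rangle\in f\).

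Finally I would rule out \(A=\{p\}\): if \(A=\{p\}\) then \(f(p)=A=\{p\}\), contradicting the choice of \(p\). Hence \(A\) contains some point \(c\neq p\). Since \(c\in A\subseteq f(p)\) we have \(\langle p,c\rangle\in f\), and since \(c\) is a fixed point we have \(\langle c,c\rangle\in f\); taking \(x=c\) and \(y=p\) produces the two required points with \(x\neq y\). I expect the main obstacle to be precisely the Zorn/compactness step that produces the minimal invariant set together with the verification that minimality forces \(f(a)=A\); the remaining steps are bookkeeping with the definitions of fullness, idempotence, and upper semicontinuity.
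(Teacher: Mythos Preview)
Your argument is correct and follows a genuinely different route from the paper's proof. The paper proceeds constructively: starting from a point \(x_0\) with \(\langle x_0,x_0\rangle\notin f\), it builds a sequence \(x_0,x_1,\dots\) with \(\langle x_i,x_j\rangle\in f\) iff \(i<j\), stopping if some \(\langle x_{n+1},x_{n+1}\rangle\) lands in \(f\); otherwise it passes to a limit point \(x_\omega\) of the sequence in the compact space and shows \(\langle x_\omega,x_\omega\rangle,\langle x_0,x_\omega\rangle\in f\) by closedness of \(f\). Your approach instead applies Zorn's lemma to the poset of nonempty closed \(f\)-invariant subsets of \(f(p)\) and observes that a minimal such set consists entirely of fixed points. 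Your route is more structural and in fact yields a stronger by-product (a minimal closed set \(A\subseteq f(p)\) on which \(f\) restricts to the constant set-valued map \(a\mapsto A\)), echoing the standard ``minimal invariant set'' argument from topological dynamics; the paper's route is more elementary and hands-on, using only a sequential/limit-point argument rather than Zorn. Both hinge on compactness in an essential but different way: yours via the finite intersection property for chains, the paper's via the existence of limit points of infinite sets.
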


\begin{proof}
  Note first that if \(\iota\subsetneq f\), then the lemma follows
  immediately. So
  let \(x_0\in X\) be a point where \(\<x_0,x_0\>\not\in f\).
  
  Suppose \(x_i\) is defined for \(i\leq n\) such that
  \(\<x_i,x_j\>\in f\) if and only if \(i<j\).
  So we may choose \(x_{n+1}\) distinct from \(x_i\) for \(i\leq n\) 
  such that \(\<x_n,x_{n+1}\>\in f\). If \(\<x_{n+1},x_{n+1}\>\in f\),
  then the lemma is satisfied by \(x=x_{n+1}\) and \(y=x_n\).
  Note that by idempotence, \(\<x_{n+1},x_i\>\not\in f\)
  for \(i\leq n\) as
  otherwise \(x_i\in f(x_{n+1})\subseteq f(f(x_n))=f(x_n)\)
  contradicting \(\<x_n,x_i\>\not\in f\).
  
  Since \(\{x_n:n<\omega\}\) is an infinite set in a compact
  Hausdorff space, it has a limit point \(x_\omega\). Note then that
  for any open neighborhood \(U\) of \(x_\omega\), \(U\) contains
  infinitely many \(x_n\), so choose \(i<j\) such that
  \(x_i,x_j\in U\). Then, it follows that the
  basic open neighborhood \(U^2\) of \(\<x_\omega,x_\omega\>\) 
  contains \(\<x_i,x_j\>\). Thus \(\<x_\omega,x_\omega\>\) is a limit
  point of \(\{\<x_i,x_j\>:i<j<\omega\}\subseteq f\), and as \(f\)
  is closed, \(\<x_\omega,x_\omega\>\) belongs to \(f\). Then since 
  \(x_\omega\not=x_0\) (as \(\<x_0,x_0\>\not\in f\)), 
  we may similarly show
  \(\<x_0,x_\omega\>\) is a limit point of 
  \(\{\<x_0,x_n\>:0<n<\omega\}\subseteq f\), and therefore
  \(\<x_0,x_\omega\>\in f\). The lemma is now witnessed by
  \(x=x_\omega\) and \(y=x_0\).
\end{proof}

\begin{lemma}
  Suppose \(\<x,x\>,\<x,y\>\in f\) for distinct \(x,y\in X\)
  and an idempotent bonding relation \(f\). Then \(f\) satisfies
  condition \(\Gamma\).
\end{lemma}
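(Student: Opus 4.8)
The plan is to reduce condition \(\Gamma\) to the problem of locating a self-loop point inside \(f(x)\setminus\{x\}\). Indeed, if I can produce some \(a\in f(x)\) with \(a\neq x\) and \(\<a,a\>\in f\), then \(\<x,x\>,\<x,a\>,\<a,a\>\in f\) witness \(\Gamma\), where \(\<x,a\>\in f\) simply because \(a\in f(x)\). So the entire task becomes: find a self-loop somewhere in the image of \(x\) that is distinct from \(x\).

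The engine for producing self-loops will be a sub-claim: \emph{every nonempty closed \(f\)-invariant set \(C\) (meaning \(f[C]=C\)) contains a point \(a\) with \(\<a,a\>\in f\).} To prove this I would note that \(C\) is itself a compact Hausdorff space and that \(g=f\rest C\) is an idempotent bonding relation on \(C\) that is surjective onto \(C\): fullness and closedness are inherited, idempotence holds because \(f(a)\subseteq C\) and \(f^2(a)=f(a)\) for \(a\in C\), and surjectivity is exactly \(f[C]=C\). If \(g\) is trivial, then by the remark that every trivial idempotent surjection is the single-valued identity, \(g=\iota\rest C\), so in fact every point of \(C\) is a self-loop. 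If \(g\) is nontrivial, then the preceding lemma, applied to \(g\) on \(C\), yields distinct \(p,q\in C\) with \(\<p,p\>\in g\subseteq f\), giving the desired self-loop \(p\). Either way \(C\) contains a self-loop point.

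With the sub-claim in hand I would apply it to \(C=f(y)\). This set is nonempty by fullness, closed, invariant since \(f[f(y)]=f^2(y)=f(y)\), and contained in \(f(x)\) since \(y\in f(x)\) forces \(f(y)\subseteq f[f(x)]=f(x)\). Thus the sub-claim produces a self-loop \(a\in f(y)\subseteq f(x)\). If \(a\neq x\), the reduction finishes the proof immediately. The only remaining possibility is \(a=x\), that is \(x\in f(y)\); here I would run idempotence in the reverse direction: \(x\in f(y)\) gives \(f(x)\subseteq f[f(y)]=f(y)\), so \(f(y)=f(x)\), and since \(y\in f(x)=f(y)\) this yields \(\<y,y\>\in f\). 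Now \(\<x,x\>,\<x,y\>,\<y,y\>\in f\) witness \(\Gamma\) outright.

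I expect the main obstacle to be precisely this degenerate case \(a=x\): the sub-claim only guarantees \emph{some} self-loop in \(f(y)\), and a priori that self-loop could be \(x\) itself, which alone is useless for \(\Gamma\). The idea that rescues the argument is to apply the sub-claim to \(f(y)\) rather than to \(f(x)\), so that the hypothesis \(\<x,y\>\in f\) can be fed back in: a self-loop at \(x\) lying inside \(f(y)\) collapses \(f(y)\) onto all of \(f(x)\) and thereby forces the self-loop \(\<y,y\>\) that I actually want. A secondary technical point to verify carefully is that \(f\rest C\) genuinely is a bonding relation on the subspace \(C\) — full, closed, idempotent, and surjective — so that the preceding lemma is legitimately applicable there.
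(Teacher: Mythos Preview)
Your argument is correct, and it takes a genuinely different route from the paper. The paper proves Lemma~2 by essentially rerunning the sequence construction of Lemma~1 inside \(f(x)\): it builds \(z_0=y,z_1,z_2,\ldots\) with \(\<z_i,z_j\>\in f\Leftrightarrow i<j\) and \(\<x,z_i\>\in f\) for all \(i\), then passes to a limit point \(z\) by compactness to obtain \(\<z,z\>,\<x,z\>\in f\), with a separate check that \(z\neq x\). Your approach instead isolates the reusable fact that any closed \(f\)-invariant set contains a self-loop, proves it once by restricting \(f\) to that set and invoking Lemma~1 together with the remark that trivial idempotent surjections are the identity, and then applies it to \(C=f(y)\). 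The degenerate case \(a=x\) is handled cleanly via \(x\in f(y)\Rightarrow f(x)\subseteq f(y)\Rightarrow y\in f(y)\). What you gain is modularity: you recycle Lemma~1 and the trivial-surjection remark rather than reproving their content, and you extract an intermediate statement (invariant closed sets contain self-loops) that is of independent interest. What the paper's direct construction gains is self-containment: it does not need to verify that \(f\rest C\) is a well-defined surjective idempotent bonding relation on the subspace \(C\), which you correctly flagged as a point requiring care (and which does go through, since \(a\in f(y)\) implies \(f(a)\subseteq f[f(y)]=f(y)\)).
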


\begin{proof}
  Let \(z_0=y\). If \(\<y,y\>=\<z_0,z_0\>\in f\), we are done.
  
  Suppose \(z_i\) is defined for \(i\leq n\) such that
  \(\<z_i,z_j\>\in f\) if and only if \(i<j\), and 
  \(\<x,z_i\>\in f\) for \(i\leq n\).
  So we may choose \(z_{n+1}\) distinct from \(z_i\) for \(i\leq n\) 
  such that \(\<z_n,z_{n+1}\>\in f\). Note that \(\<x,z_{n+1}\>\in f\)
  since \(\<x,z_n\>,\<z_n,z_{n+1}\>\in f\) and thus
  \(z_{n+1}\in f(z_n)\subseteq f(f(x))=f(x)\).
  If \(\<z_{n+1},z_{n+1}\>\in f\),
  then the condition \(\Gamma\) is witnessed by 
  \(\<x,x\>,\<x,z_{n+1}\>,\<z_{n+1},z_{n+1}\>\).
  On the other hand, \(\<z_{n+1},z_i\>\not\in f\)
  for \(i\leq n\) as otherwise by idempotence
  \(z_i\in f(z_{n+1})\subseteq f(f(z_n))=f(z_n)\)
  contradicting \(\<z_n,z_i\>\not\in f\).
  Similarly, \(\<z_n,x\>\not\in f\) as otherwise
  \(\<z_n,x\>,\<x,z_n\>\in f\Rightarrow\<z_n,z_n\>\in f\).
  
  Since \(\{z_n:n<\omega\}\) is an infinite set in a compact
  Hausdorff space, it has a limit point \(z\). Note then that
  for any open neighborhood \(U\) of \(z\), \(U\) contains
  infinitely many \(z_n\), so choose \(i<j\) such that
  \(z_i,z_j\in U\). Then, it follows that the
  basic open neighborhood \(U^2\) of \(\<z,z\>\) contains
  \(\<z_i,z_j\>\). Thus \(\<z,z\>\) is a limit
  point of \(\{\<z_i,z_j\>:i<j<\omega\}\subseteq f\), and as \(f\)
  is closed, \(\<z,z\>\) belongs to \(f\). We may similarly show
  \(\<x,z\>\) is a limit point of 
  \(\{\<x,z_n\>:0<n<\omega\}\subseteq f\), and therefore
  \(\<x,z\>\in f\).
  
  We know \(x\not=z\) since otherwise 
  \(\{\<z_0,z_{n+1}\>:n<\omega\}\subseteq f\)
  would imply its limit
  \(\<z_0,z\>=\<z_0,x\>\in f\), which was disproved above.
  Therefore \(\<x,x\>,\<x,z\>,\<z,z\>\in f\) witness condition
  \(\Gamma\).
\end{proof}

\begin{lemma}
  The inverse of an idempotent relation
  is also an idempotent relation.
\end{lemma}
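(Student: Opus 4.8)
The plan is to reduce the whole statement to a single algebraic identity, the contravariance of inversion over composition, specialized to the square: for every relation \(f\subseteq X^2\) one has \((f^2)^{-1}=(f^{-1})^2\). Once this identity is established, idempotence of \(f^{-1}\) is immediate. Indeed, if \(f\) is idempotent then \(f=f^2\), and applying inversion to both sides of this equation gives \(f^{-1}=(f^2)^{-1}=(f^{-1})^2\), which is exactly the assertion that \(f^{-1}\) is idempotent. So essentially all of the work lies in verifying the identity, and the idempotence hypothesis is used only at the very end.

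To prove the identity I would simply unwind both sides from the definitions of \(\circ\) and of the inverse \(f^{-1}=\{\<y,x\>:\<x,y\>\in f\}\), and check that they describe the same set of pairs. On the left, \(\<z,x\>\in(f^2)^{-1}\) holds precisely when \(\<x,z\>\in f^2\), i.e.\ when there exists \(y\in X\) with \(\<x,y\>,\<y,z\>\in f\). On the right, \(\<z,x\>\in(f^{-1})^2\) holds precisely when there exists \(y\in X\) with \(\<z,y\>,\<y,x\>\in f^{-1}\), which by definition of the inverse means \(\<y,z\>,\<x,y\>\in f\). These two conditions are the very same statement, witnessed by the very same intermediate point \(y\); hence the two sets are equal. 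The conclusion of the lemma then follows in the one line indicated above.

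I do not anticipate any genuine obstacle: the lemma is a purely formal consequence of how inversion interacts with relational composition, and requires none of the topological hypotheses (compactness, the Hausdorff property, or closedness) nor fullness. The only place where care is needed is the bookkeeping when passing to inverses — keeping the order of the composed pairs straight and confirming that the existential witness \(y\) occupies the same role on both sides, so that the equivalence is an identity of sets rather than merely an inclusion. This is what guarantees that idempotence alone transfers to the inverse.
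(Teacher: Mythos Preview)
Your proposal is correct and matches the paper's own proof, which is the one-line chain \((f^{-1})^2=(f^2)^{-1}=f^{-1}\); you simply supply the elementary verification of the identity \((f^2)^{-1}=(f^{-1})^2\) that the paper leaves implicit. There is no substantive difference in approach.
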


\begin{proof}
  \((f^{-1})^2=(f^2)^{-1}=f^{-1}\).
\end{proof}

\begin{theorem}
  An idempotent bonding relation is nontrivial if and only
  if it satisfies condition \(\Gamma\) if and only if
  it contains two points \(\<x,x\>,\<x,y\>\).
\end{theorem}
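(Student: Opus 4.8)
The plan is to establish the three-way equivalence through a cycle of implications. Write (A) for ``\(f\) is nontrivial'', (B) for ``\(f\) satisfies \(\Gamma\)'', and (C) for ``\(\<x,x\>,\<x,y\>\in f\) for some distinct \(x,y\)''; I would prove (A) \(\Rightarrow\) (B) \(\Rightarrow\) (C) \(\Rightarrow\) (A). Two of the three links are nearly free. For (B) \(\Rightarrow\) (C), the triple witnessing \(\Gamma\) already contains the pair \(\<x,x\>,\<x,y\>\) demanded by (C). For (C) \(\Rightarrow\) (A), note that \(\<x,x\>\in f\) gives \(x\in f(x)\), so together with \(\<x,y\>\in f\) we obtain \(\<x,y\>\in f\rest f(x)\); since \(x\ne y\) this pair lies outside \(\iota\), whence \(f\rest f(x)\ne\iota\rest f(x)\) and \(f\) is nontrivial.

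The real content is (A) \(\Rightarrow\) (B), and the key move is to pass to the inverse relation. Lemma~1 hands us, for a nontrivial idempotent \(f\), distinct \(x,y\) with \(\<x,x\>,\<y,x\>\in f\). These pairs both terminate at \(x\), the opposite orientation from what Lemma~2 requires; but rewriting them as \(\<x,x\>,\<x,y\>\in f^{-1}\) turns them into precisely its hypothesis. Since Lemma~3 makes \(f^{-1}\) idempotent, Lemma~2 applies and yields that \(f^{-1}\) satisfies \(\Gamma\). Finally I would transport \(\Gamma\) back to \(f\) using its self-duality: a witnessing triple \(\<a,a\>,\<a,b\>,\<b,b\>\in f^{-1}\) transposes coordinatewise to \(\<a,a\>,\<b,a\>,\<b,b\>\in f\), which after relabelling witnesses \(\Gamma\) for \(f\). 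This closes the cycle.

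I expect the main obstacle to be conceptual rather than computational. The twist is noticing that Lemma~1 produces the pair \(\<y,x\>\) while Lemma~2 consumes the pair \(\<x,y\>\)---opposite orientations---and that the inverse relation is exactly the bridge between them: Lemma~3 guarantees \(f^{-1}\) stays idempotent so that Lemma~2 is applicable, and the symmetry of \(\Gamma\) under transposition returns the conclusion to \(f\). Once this duality is spotted, every remaining step is routine.
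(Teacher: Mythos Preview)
Your argument is essentially identical to the paper's: the same cycle (B)\(\Rightarrow\)(C)\(\Rightarrow\)(A)\(\Rightarrow\)(B), the same two easy implications, and for (A)\(\Rightarrow\)(B) the same maneuver of invoking Lemma~1, passing to \(f^{-1}\), citing Lemma~3 for idempotence, applying Lemma~2 to \(f^{-1}\), and then reading \(\Gamma\) back in \(f\) via the transposition symmetry. There is no meaningful difference in strategy or in level of detail.
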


\begin{proof}
  Obviously, if a bonding relation \(f\) has condition
  \(\Gamma\) then it contains two points \(\<x,x\>,\<x,y\>\).
  It then follows from those two points that
  \(f\rest f(x)\) is not the identity, and therefore
  \(f\) is nontrivial. 
  
  If \(f\) is nontrivial idempotent, then apply Lemma 1
  to obtain the points \(\<x,x\>,\<y,x\>\in f\). Then
  \(\<x,x\>,\<x,y\>\in f^{-1}\), which is idempotent
  by Lemma 3. So Lemma 2 may be applied to show
  that \(f^{-1}\) has condition \(\Gamma\), and therefore
  so does \(f\).
\end{proof}

\section{An Application}

The authors used \(f\)'s condition \(\Gamma\) in \cite{CV} to
show that for an ordinal \(\alpha\),
the inverse limit \(\varprojlim\{I,f,\alpha\}\)
is metrizable if and only if \(\alpha\) is countable. 
Using a few unpublished results of the first author
along with the main result of this note, this in fact 
generalizes to the following theorem:

\begin{theorem}
  Let \(f\) be a nontrivial bonding relation on a compact
  metrizable space \(X\), and let \(L\) be an arbitrary
  total order. Then the inverse limit
  \(\varprojlim\{X,f,L\}\) is metrizable if and only if
  it is Corson compact if and only if
  \(L\) is countable.
\end{theorem}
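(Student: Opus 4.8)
The plan is to establish the cycle of implications \(L\) countable \(\Rightarrow\) \(\varprojlim\{X,f,L\}\) metrizable \(\Rightarrow\) \(\varprojlim\{X,f,L\}\) Corson compact \(\Rightarrow\) \(L\) countable; the first two are routine and the last carries the content. Throughout I use the standard description \(\varprojlim\{X,f,L\}=\{p\in X^L:\<p_{\ell'},p_\ell\>\in f\text{ whenever }\ell<\ell'\}\). For the first implication, if \(L\) is countable then \(X^L\) is a countable product of compact metrizable spaces, hence compact metrizable, and the displayed conditions are closed because \(f\) is closed; so the inverse limit is a compact metrizable subspace. For the second, every compact metrizable space embeds in \([0,1]^\omega\), which is a \(\Sigma\)-product over a countable index set, so it is Corson compact.

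For the remaining implication I would argue the contrapositive: assuming \(L\) uncountable, I will exhibit a closed subspace of \(\varprojlim\{X,f,L\}\) that is not Corson compact. First I would invoke the cited unpublished results to pass from the nontrivial relation \(f\) to an idempotent bonding relation \(g\) that is nontrivial exactly when \(f\) is and for which the metrizability and Corson-compactness of \(\varprojlim\{X,g,L\}\) are inherited by \(\varprojlim\{X,f,L\}\). Granting this, the main theorem of the present note applies to \(g\): being nontrivial and idempotent it satisfies condition \(\Gamma\), so there are distinct \(x,y\in X\) with \(\<x,x\>,\<x,y\>,\<y,y\>\in g\).

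Next I would use condition \(\Gamma\) to embed the space of initial segments of \(L\). For each down-set \(A\subseteq L\) define \(p^A\in X^L\) by \(p^A_\ell=y\) for \(\ell\in A\) and \(p^A_\ell=x\) for \(\ell\notin A\). Checking the pair \(\ell<\ell'\): if both lie in \(A\) one needs \(\<y,y\>\in g\), if both lie outside \(A\) one needs \(\<x,x\>\in g\), and if \(\ell\in A,\ell'\notin A\) one needs \(\<x,y\>\in g\) — all supplied by \(\Gamma\) — while the case \(\ell\notin A,\ell'\in A\) is impossible since \(A\) is a down-set. Hence \(p^A\in\varprojlim\{X,g,L\}\). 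The down-sets of a linear order are totally ordered by inclusion and, as a closed subset of \(2^L\), form a compact linearly ordered space \(\mathcal D(L)\) (the space of cuts of \(L\)); the assignment \(A\mapsto p^A\) is coordinatewise continuous and injective, so as a continuous injection from a compact space into a Hausdorff space it is a closed topological embedding.

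Finally I would identify \(\mathcal D(L)\) as non-Corson. For each \(\ell\in L\) the down-sets \(\{m:m<\ell\}\) and \(\{m:m\leq\ell\}\) are consecutive, so \(\mathcal D(L)\) has at least \(|L|\) jumps; an uncountable family of jumps precludes a countable base (each jump forces a distinct basic open set attaining its minimum there), so \(\mathcal D(L)\) is not second countable and hence, being compact, not metrizable. By the known fact that a compact linearly ordered space is Corson compact if and only if it is metrizable — the double arrow space, essentially \(\mathcal D([0,1])\), being the emblematic non-Corson instance — \(\mathcal D(L)\) is not Corson compact. Since Corson compactness is inherited by closed subspaces, \(\varprojlim\{X,g,L\}\), and thus \(\varprojlim\{X,f,L\}\), is not Corson compact, closing the cycle. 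The main obstacle is the reduction of the second paragraph: making precise, via the unpublished results, how an arbitrary nontrivial bonding relation yields an idempotent relation whose inverse-limit behavior transfers back to \(f\). The linearly-ordered-space characterization invoked at the end is classical, and the explicit \(\Gamma\)-construction and jump count are elementary.
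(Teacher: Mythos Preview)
Your argument is correct and follows essentially the same route as the paper: your space \(\mathcal D(L)\) of down-sets is precisely the paper's \(\check L\cong\varprojlim\{2,\gamma,L\}\), and your explicit embedding \(A\mapsto p^A\) is exactly how one realizes that inverse limit inside \(\varprojlim\{X,g,L\}\); the jump-count and the LOTS Corson/metrizable equivalence are the facts the paper alludes to with ``it may be shown.'' The one cosmetic difference is how the black box is invoked: the paper simply asserts that the unpublished results together with condition~\(\Gamma\) yield \(\varprojlim\{2,\gamma,L\}\subseteq\varprojlim\{X,f,L\}\) directly, whereas you speculate that those results first replace \(f\) by an idempotent \(g\) before applying the main theorem---since neither account spells out the unpublished input, this is a difference in packaging rather than in substance.
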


\begin{proof}
  If \(L\) is countable then the subspace 
  \(\varprojlim\{X,f,L\}\) of the metrizable space
  \(X^L\) is of course metrizable. If \(L\) is uncountable,
  note that \(\varprojlim\{X,f,L\}\) contains the
  subspace \(\varprojlim\{2,\gamma,L\}\) where
  \(\gamma=\{\<0,0\>,\<0,1\>,\<1,1\>\}\). It may
  be shown that \(\varprojlim\{2,\gamma,L\}\) is homeomorphic
  to a compact linearly ordered topological space
  \(\check L\) which is metrizable if and only if
  it is Corson compact if and only if it is second-countable.
  The result follows by showing that \(\check L\)
  is second-countable if and only if \(L\) is countable.
\end{proof}

\section{Acknowledgments}

The authors would like to thank Jonathan Meddaugh for his thoughtful
input on this topic during the 2016 Spring Topology and Dynamics 
Conference, and more generally thank the organizers of all STDC
conferences over the past fifty years for giving young researchers
an environment to share ideas and collaborate on problems.

\end{document}